\documentclass[letterpaper, 10 pt, conference]{ieeeconf}  

\usepackage[T1]{fontenc}
\usepackage{bm}
\usepackage{amsmath}
\usepackage{algorithm}
\usepackage{algorithmic}
\usepackage{amsfonts}
\usepackage{mathtools}
\usepackage{xcolor}
\newtheorem{theorem}{Theorem}
\usepackage{booktabs} 
\usepackage{xcolor}
\usepackage{subcaption}

\usepackage{pdfpages}

\IEEEoverridecommandlockouts                              

\title{\LARGE \bf
Free-Horizon Newton Method for Nonlinear Optimal Control
}

\author{Gyeongrok Ha$^{1}$, Kenji Fujimoto$^{1}$, and Ichiro Maruta$^{1}$
\thanks{*This work was supported by JSPS KAKENHI Grant Numbers JP23K20946 and JP24K00908.}
\thanks{$^{1}$G. Ha, K. Fujimoto, and I. Maruta
 are with Department of Aeronautics and Astronautics, Graduate
 School of Engineering, Kyoto University, Kyoto, 615-8540, Japan
        {\tt\small ha.rok.77s@st.kyoto-u.ac.jp}}%
}

\begin{document}

\includepdf[pages=1]{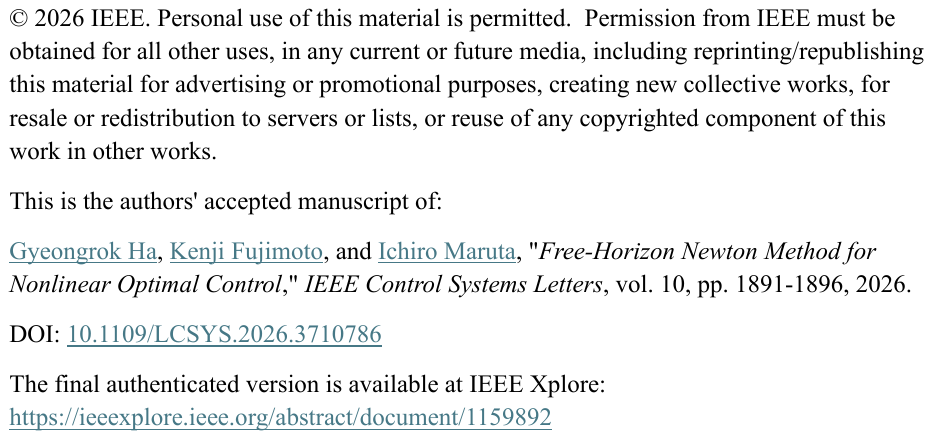}

\maketitle
\thispagestyle{empty}
\pagestyle{empty}

\begin{abstract}
This paper presents a novel trajectory optimization method for nonlinear $\ell^1$-optimal control problems in which the control horizon is treated as a free variable, allowing both the control duration and the $\ell^1$-norm of the control input to be evaluated. Unlike traditional approaches that minimize the $\ell^1$-norm of the input under a fixed control horizon, the proposed method treats both the control input and control horizon as design variables, enabling joint optimization of energy and temporal efficiency. The method extends a Newton-based algorithm to accommodate this objective, leveraging gradient information with respect to both variables. This formulation enables one to find high-fidelity optimal trajectories without pre-specifying the control horizon. The effectiveness and robustness of the method are demonstrated through numerical simulations, in which the proposed algorithm recovers the analytical solution to the spacecraft Hohmann transfer, and determines an optimal control horizon for an Earth--Moon transfer.

\end{abstract}

\section{INTRODUCTION}
Optimal trajectory design plays a central role in modern space mission planning, where both fuel efficiency and mission duration are critical design criteria \cite{larson1999space}. These problems are expressed as infinite or free-horizon optimal control problems, where both the control sequence and control horizon are treated as optimization variables. Methods to solve such problems are broadly categorized into direct and indirect approaches. Direct methods transcribe the problem into nonlinear programming formulations, where the free terminal time is commonly handled through time normalization, collocation, or time-dilation techniques \cite{garg2011pseudospectral,infeld2007design,elango2025continuous}, 
while indirect methods derive optimality conditions of the trajectory via Pontryagin’s maximum principle \cite{frego2014numerical,liberzon2011calculus}.

This paper proposes a direct method based on a Newton algorithm for solving nonlinear optimal control problems with an \(\ell^1\)-norm objective function analogous to the total control effort along a trajectory. A key feature of the proposed approach is that each iteration is formulated as an \(\ell^1\)-minimization problem, directly solving for a minimum-\(\ell^1\)-norm correction that satisfies the linearized terminal constraints. Consequently, the sparsity-inducing structure of the original minimum-fuel objective is preserved throughout the optimization process, making the method particularly well suited for spacecraft trajectory optimization and other applications where impulsive or bang-bang control structures are desirable \cite{polyak2020sparse,hamada2022ℓ}. In contrast to conventional Newton methods that require the control horizon to be fixed in advance, the proposed approach treats both the control horizon and the control input sequence as free optimization variables. This joint formulation enables the computation of trajectories that are simultaneously optimal with respect to both control horizon and control inputs. The proposed algorithm also incorporates an optimality verification step based on the Karush-Kuhn-Tucker (KKT) conditions to assess the first-order optimality of the obtained solution \cite{nocedal1999numerical}.

While general-purpose approaches such as continuous-time successive convexification (CT-SCVX) provide flexible frameworks for handling nonlinear dynamics, constraints, and free-horizon problems \cite{elango2025continuous}, they do not explicitly compute sparse control corrections through pure $\ell^1$-minimization subproblems at each iteration. In contrast, the proposed method directly minimizes a first-order model of the original objective while simultaneously accounting for variations in the control horizon. As a result, each subproblem retains the structure of the continuous-time minimum-fuel problem without introducing additional local time-scaling variables that alter the geometry of the $\ell^1$ objective. These properties make the proposed method particularly well matched to trajectory optimization problems whose optimal solutions are inherently sparse, while maintaining substantially lower computational complexity than more general nonlinear programming frameworks.

The structure of this paper is organized as follows. Section II introduces the trajectory optimization problem addressed in this work and reviews related research in the field. Section III presents the proposed algorithm, an extension of previous Newton-based algorithms that simultaneously optimizes both the control input sequence and the transfer time. It also includes an analysis of the algorithm's convergence properties and the optimality of the computed solution. Section IV demonstrates the effectiveness of the method through numerical simulations of spacecraft trajectory optimization. Finally, Section V concludes the paper and highlights its key contributions.

\section{Background and Preparations}
\label{sec:Background}
This section introduces relevant literature that the present study builds upon \cite{hamada2022ℓ}. We formulate the $\ell^1$-optimal control problem, which can be solved by a Newton-based algorithm that iteratively updates the control sequence.

\subsection{The $\ell^1$-Optimization Problem}
Consider the following continuous dynamics of an optimal control problem,
\begin{align}
    \dot{\bm{x}}(t)=\bm{f}_\mathrm{c}(\bm{x}(t),\widetilde{\bm{u}}(t)), \quad t \in [0,T]\label{continuousdynamics}
\end{align}
where $\bm{x}\in\mathbb{R}^n$ and $\widetilde{\bm{u}}\in\mathbb{R}^m$ represent the state and input. Using an appropriate discretization scheme, such as a fourth-order Runge-Kutta method (RK4), and assuming a standard zero-order hold (ZOH) on the control input over each sampling interval, i.e., $\widetilde{\bm{u}}(t) = \widetilde{\bm{u}}[j]$ for $t \in [j\tau, (j+1)\tau)$, the following discrete-time model is obtained:
\begin{align}
    \bm{x}[j+1]&=\bm{f}(\bm{x}[j],\widetilde{\bm{u}}[j];\tau),\,\,\,j=0,\dots,N-1\label{problem1}
\end{align}
Here, $\bm{x}[j]\in\mathbb{R}^n$ and $\widetilde{\bm{u}}[j]\in\mathbb{R}^m$ denote the state and input at time step $j$, respectively. The time step $\tau$, which is the elapsed time between discretization points, is defined by the equidistant discretization of the control horizon $T$ as
\begin{align}
    \tau=T/N.\label{taudef}
\end{align}
We also define $\bm{u}\in\mathbb{R}^{Nm}$, which denotes the aggregated control input over all the time steps $j=0,\dots,N-1$. 
\begin{align}
    \bm{u}&=[\widetilde{\bm{u}}[0]^\top,\,\widetilde{\bm{u}}[1]^\top,\dots\,\widetilde{\bm{u}}[N-1]^\top]^\top\label{problem2}
\end{align}
We can see that from \eqref{problem1} and \eqref{problem2}, the terminal state $\bm{x}[N]$ is uniquely determined from the initial state $\bm{x}[0]$ and the control sequence $\bm{u}$. Thus, the $\ell^1$-minimization problem can be expressed as follows:
\begin{align}
    \underset{\bm{u}}{\text{minimize}}\hspace{0.5cm}&\|\bm{u}\|_1\,,\label{l1normobj}\\
    \text{s.t.}\hspace{0.5cm}&\text{(\ref{problem1})}\,,\bm{x}[0]=\bm{x}_0\,,\,\bm{x}[N]=\bm{x}_\text{f}\,.\label{problem4}
\end{align}

The control effort is given by \eqref{l1normobj}, where $\|\cdot\|_1$ denotes the $\ell^1$-norm of the control sequence vector. By solving the above nonlinear program, the control sequence $\bm{u}$ that minimizes the $\ell^1$-norm under the control horizon $T$ is obtained.

\subsection{$\ell^1$-Optimal Newton Method}
To solve the $\ell^1$-optimal control problem, Hamada et al. proposed an iterative scheme to find a locally $\ell^1$-optimal solution that satisfies boundary conditions \cite{hamada2022ℓ}.

To apply the $\ell^1$-optimal Newton method, the following constraint is first defined.
\begin{align}
    \bm{g}(\bm{u}) \coloneqq \bm{x}[N](\bm{u}) - \bm{x}_\text{f}\in \mathbb{R}^n\label{constraint}
\end{align}
Here, $\bm{x}[N]$ is computed from the successive relation \eqref{problem1} using the control sequence $\bm{u}$. We can see that when $\bm{g}(\bm{u}^*)=\bm{0}$, the boundary conditions \eqref{problem4} are satisfied, and the input $\bm{u}=\bm{u}^*$ is feasible.

Given an initial guess $\bm{u}_0$, the algorithm updates the inputs $\bm{u}_k$ according to the following update rule.
\begin{align}
    \bm{u}_{k+1} = \bm{u}_k - \bm{w}_k \label{l1optimal1}
\end{align}
The update step $\bm{w}_k$ at step $k$ is determined by solving the following linear program:
\begin{align}
    \underset{\bm{w}_k}{\text{minimize}} & \hspace{0.5cm} \|\bm{u}_k-\bm{w}_k\|_1 \label{l1optimal2} \,.\\
    \text{s.t.} & \hspace{0.5cm} \mathcal{J}_{\bm{g}}(\bm{u}_k) \bm{w}_k = \bm{g}(\bm{u}_k)\,,\label{l1optimal3}\\
    & \hspace{0.5cm} \|\bm{w}_k\|_1\leq\alpha_1\max(\|\bm{g}(\bm{u}_k)\|_1,\epsilon)\,.\label{l1optimal4}
\end{align}
Here, $\mathcal{J}_{\bm{g}}(\bm{u}_k)\coloneqq\frac{\partial\bm{g}}{\partial\bm{u}}(\bm{u}_k)$ is the Jacobian matrix of $\bm{g}(\bm{u})$ evaluated at $\bm{u}=\bm{u}_k$. The parameters \(\alpha_1>0\) and \(\epsilon>0\) control the convergence behavior of the algorithm and must satisfy specific conditions to ensure quadratic convergence of the error \(\|\bm{g}(\bm{u}_k)\|_1\). See \cite{hamada2022ℓ} for details.

\section{Proposed Method}
\label{sec:proposed}

In this section, we propose the free-horizon Newton method by modifying the $\ell^1$-optimal Newton method to accommodate a free horizon.
A proof of quadratic convergence and a method to verify the first-order optimality conditions are also provided.

\subsection{The Free-Horizon Model}
We begin by considering a continuous-time optimal control problem that motivates the discrete-time formulation introduced to accommodate the Newton-based algorithm. Let the system dynamics be described by \eqref{continuousdynamics} with $t\in[0,T]$, and where the terminal time $T>0$ is treated as a free variable. To quantify the control effort over a variable time horizon, we consider the following performance index:
\begin{align}
    J_\mathrm{c}(\widetilde{\bm{u}}(t),T) =\int_{0}^{T}\|\widetilde{\bm{u}}(t)\|_{1}\,\mathrm{d}t.
\end{align}

Under the ZOH discretization, the continuous-time cost reduces to
\begin{align}
    \int_{0}^{T}\|\widetilde{\bm{u}}(t)\|_1\,\mathrm{d}t
    \;=\;
    \tau\sum_{j=0}^{N-1}\|\widetilde{\bm{u}}[j]\|_1
    =\frac{T}{N}\|\bm{u}\|_1.\label{objfuncdiscretization}
\end{align}
This discretization naturally leads to a dependence of the objective function on the terminal time $T$. With this interpretation in mind, we next formulate the discrete-time free-horizon optimization problem.

To enable choosing an arbitrary control horizon for the Newton method, we treat the horizon $T$ as a free variable. To accommodate this additional parameter, we define the following extended decision vector.
\begin{align}
    \bm{U}\coloneqq(\bm{u}^\top,T)^\top\in\mathbb{R}^{Nm+1}
\end{align}
Within this model, the control horizon is no longer fixed, and the objective function must be adjusted accordingly. The control effort of a given trajectory with temporal considerations is given by the discretization \eqref{objfuncdiscretization}. Since the discretization number $N$ is fixed,
the objective function can be written as simply
\begin{align}
    J(\bm{U})=T\|\bm{u}\|_1\,. \label{nonconvexobj}
\end{align}
Thus, the free-horizon optimal control problem is formulated as the following static optimization problem:
\begin{align}
    \underset{\bm{U}}{\text{minimize}}\hspace{0.5cm}&J(\bm{U})\,,\label{propproblem1}\\
    \text{s.t.}\hspace{0.5cm}&\eqref{problem1}\,,\eqref{taudef}\,,\bm{x}[0]=\bm{x}_0\,,\,\bm{x}[N]=\bm{x}_\text{f}\,.\label{propproblem4}
\end{align}

Note that this minimization objective is nonconvex, and cannot be directly minimized using linear programming methods such as \eqref{l1optimal2}--\eqref{l1optimal4}. To address this, we apply a first-order approximation of \eqref{nonconvexobj}. Specifically, we consider small deviations of the extended input $\bm{U}\rightarrow \bm{U}+\Delta\bm{U}$ where $\bm{u}\rightarrow \bm{u}+\Delta\bm{u}$ and $T\rightarrow T+\Delta T$, and neglect second-order terms. Under this approximation, the objective becomes
\begin{align}
    J(\bm{U}+\Delta\bm{U})= T\|\bm{u}+\Delta\bm{u}\|_1+\Delta T\|\bm{u}\|_1 + o(\| \Delta \bm{U} \|)\,\label{linobjective}
\end{align}
where the term $o(\| \Delta \bm{U} \|)$ is ignored when solving the linear program.

Lastly, using the same convention as \eqref{constraint}, we express the terminal state constraint in \eqref{propproblem4} as a function of $\bm{U}$.
\begin{align}
    \bm{G}(\bm{U}) \coloneqq \bm{x}[N](\bm{U}) - \bm{x}_\text{f}\in \mathbb{R}^n \label{proposedconstraint}
\end{align}

The Jacobian matrix of this newly defined constraint is defined as follows.
\begin{align}
    \mathcal{J}_{\bm{G}}(\bm{U})\coloneqq\left(\frac{\partial\bm{G}}{\partial\bm{u}},\frac{\partial\bm{G}}{\partial T}\right)\in\mathbb{R}^{n\times(Nm+1)}
\end{align}
The elements of this matrix can be computed analytically from the recursive relation \eqref{problem1} and the chain rule. We are now ready to apply the Newton scheme on the optimization problem \eqref{propproblem1}--\eqref{propproblem4}.

\subsection{The Update Rule}

Here, we describe the Newton-based update rule to generate a trajectory that is optimal in terms of both the input sequence and the control horizon. The extended input is updated by the following update rule, initialized by an initial guess $\bm{U}_0=(\bm{u}_0^\top,T_0)^\top$.
\begin{align}
    \bm{U}_{k+1}=\bm{U}_{k}-\bm{W}_{k}\label{proposed1}
\end{align}
Here, the extended update term $\bm{W}_k$ is composed of the following.
\begin{align}
    \bm{W}_k = (\bm{w}_k^\top,S_k)^\top\in\mathbb{R}^{Nm+1}\label{proposed2}
\end{align}
At step $k$, the update terms $\bm{w}_k$ and $S_k$ are determined by solving the following linear program:
\begin{align}
    \underset{\bm{w}_k,S_k}{\text{minimize}} & \hspace{0.3cm} T_k\|\bm{u}_k-\bm{w}_k\|_1-S_k\|\bm{u}_k\|_1\,,\label{proposed3}\\
    \text{s.t.} & \hspace{0.3cm} \mathcal{J}_{\bm{G}}(\bm{U}_k) \bm{W}_k = \bm{G}(\bm{U}_k)\,,\label{proposed4}\\
    & \hspace{0.3cm} \|\bm{w}_k\|_1\leq\alpha_1\max(\|\bm{G}(\bm{U}_k)\|_1,\sigma_k)\,,\label{proposed5}\\
    & \hspace{0.3cm} |S_k|\leq\min (\bar{S},\alpha_2\max(\|\bm{G}(\bm{U}_k)\|_1,\sigma_k))\,.\label{proposed6}
\end{align}

The parameters $\alpha_1,\alpha_2,\sigma_k,\bar{S}>0$ are algorithm parameters that control the step size of the control update and the horizon update, and are selected to ensure convergence of the algorithm, as demonstrated in the next subsection. The novelty of the proposed algorithm compared to \cite{hamada2022ℓ} lies primarily in \eqref{proposed3}, \eqref{proposed4}, and \eqref{proposed6}. The objective \eqref{proposed3} is now \eqref{linobjective}, with higher order terms ignored to ensure that the subproblem is a linear program. Note that the first term of \eqref{proposed3} corresponds to a direct $\ell^1$-minimization of the next input, which naturally favors sparse corrections at each subproblem \cite{hamada2022ℓ}. \eqref{proposed4} permits updates to the horizon, with \eqref{proposed5} and \eqref{proposed6} establishing a trust region. Particular attention must be paid to the choice of \(\bar{S}\), which sets an upper bound on the step size taken by \(T_k\). This constraint is essential to prevent instability during the update process, since \(\bm{G}(\bm{U})\) exhibits high sensitivity to the value of \(T\). The step-dependent parameter \(\sigma_k\) is designed to allow the algorithm to continue generating updates \(\bm{W}_k\) of significant size, even when the error \(\|\bm{G}(\bm{U}_k)\|_1\) is close to zero.

\subsection{Convergence Analysis}
Here, we provide a proof of convergence of the update rule \eqref{proposed1}--\eqref{proposed6}, given a few regularity assumptions. 
\begin{theorem}
    Consider the case $\sigma_k=0$. Let $\mathcal{U}$ be the domain of $\bm{U}$, where the following are assumed to hold:
\begin{itemize}
  \item[\textbf{A1.}] \((\partial\bm{G}/\partial\bm{u})\) is full row rank for all \(\bm{U} \in \mathcal{U}\).
  \item[\textbf{A2.}] $ \|\left(\partial\bm{G}/\partial\bm{u}\right)^{+} \|_1\leq \alpha_1,\ \forall\, \bm{U} \in \mathcal{U}.$
  \item[\textbf{A3.}] \(\exists \lambda>0,\) s.t. \(\forall\,\bm{U}', \bm{U}'' \in \mathcal{U}\) with $|T'-T''|\leq\bar{S},$\\ $\|\mathcal{J}_{\bm{G}}(\bm{U}') - \mathcal{J}_{\bm{G}}(\bm{U}'')\|_1 \leq \lambda \|\bm{U}' - \bm{U}''\|_1.$
  \item[\textbf{A4.}]
   \(\lambda (\alpha_1+\alpha_2)^2\|G(\bm{U}
   _0)\|_1 / 2 < 1.\)
\end{itemize}
Then, the error $\|\bm{G}(\bm{U}_k)\|_1$ achieves quadratic convergence.
\end{theorem}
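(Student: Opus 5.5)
The argument has two parts: first show that the linear program \eqref{proposed3}--\eqref{proposed6} is feasible at every iterate, then bound the new error $\|\bm{G}(\bm{U}_{k+1})\|_1$ by a Taylor-type estimate, as in the fixed-horizon analysis of \cite{hamada2022ℓ}. Throughout, $\sigma_k=0$, so the trust-region radii in \eqref{proposed5}--\eqref{proposed6} are $\alpha_1\|\bm{G}(\bm{U}_k)\|_1$ and $\min(\bar S,\alpha_2\|\bm{G}(\bm{U}_k)\|_1)$.

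\textbf{Feasibility.} Take $S_k=0$ and $\bm{w}_k=(\partial\bm{G}/\partial\bm{u})^{+}(\bm{U}_k)\,\bm{G}(\bm{U}_k)$. By A1 the pseudoinverse is a right inverse, so $\mathcal{J}_{\bm{G}}(\bm{U}_k)\bm{W}_k=(\partial\bm{G}/\partial\bm{u})\bm{w}_k=\bm{G}(\bm{U}_k)$ and \eqref{proposed4} holds. By A2, $\|\bm{w}_k\|_1\le\alpha_1\|\bm{G}(\bm{U}_k)\|_1$, so \eqref{proposed5} holds. Constraint \eqref{proposed6} holds trivially since $S_k=0$. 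Hence the LP is feasible and its feasible set is compact, so a minimizer $\bm{W}_k$ exists. Any minimizer obeys both trust-region constraints, so
\begin{align*}
\|\bm{W}_k\|_1=\|\bm{w}_k\|_1+|S_k|\le(\alpha_1+\alpha_2)\|\bm{G}(\bm{U}_k)\|_1 .
\end{align*}
Also $|T_{k+1}-T_k|=|S_k|\le\bar S$, which is exactly what is needed to apply A3 along the segment from $\bm{U}_k$ to $\bm{U}_{k+1}$.

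\textbf{One-step contraction.} Write $\bm{G}(\bm{U}_{k+1})=\bm{G}(\bm{U}_k)-\int_0^1\mathcal{J}_{\bm{G}}(\bm{U}_k-s\bm{W}_k)\bm{W}_k\,\mathrm{d}s$. Substituting \eqref{proposed4} gives
\begin{align*}
\bm{G}(\bm{U}_{k+1})=\int_0^1\bigl(\mathcal{J}_{\bm{G}}(\bm{U}_k)-\mathcal{J}_{\bm{G}}(\bm{U}_k-s\bm{W}_k)\bigr)\bm{W}_k\,\mathrm{d}s .
\end{align*}
Every point on the segment differs from $\bm{U}_k$ in its $T$-component by at most $|S_k|\le\bar S$, so A3 applies. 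Using the induced $\ell^1$ norm, this yields
\begin{align*}
\|\bm{G}(\bm{U}_{k+1})\|_1\le\frac{\lambda}{2}\|\bm{W}_k\|_1^2\le\frac{\lambda(\alpha_1+\alpha_2)^2}{2}\|\bm{G}(\bm{U}_k)\|_1^2 .
\end{align*}

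\textbf{Induction.} Set $e_k=\frac{\lambda(\alpha_1+\alpha_2)^2}{2}\|\bm{G}(\bm{U}_k)\|_1$. The bound above becomes $e_{k+1}\le e_k^2$, so $e_k\le e_0^{2^k}$. By A4, $e_0<1$, so $e_k$ decreases monotonically and doubly exponentially to zero, which gives quadratic convergence of $\|\bm{G}(\bm{U}_k)\|_1$.

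\textbf{Main obstacle.} The delicate point is ensuring that all iterates, and the segments joining consecutive iterates, stay in $\mathcal{U}$, so that A1--A3 remain valid at every step. Since $\sum_k\|\bm{W}_k\|_1\le(\alpha_1+\alpha_2)\sum_k\|\bm{G}(\bm{U}_k)\|_1$ is bounded by a geometric-type series, the whole trajectory lies in a ball around $\bm{U}_0$ of radius $O(\|\bm{G}(\bm{U}_0)\|_1)$. I would handle this by reading $\mathcal{U}$ as containing that ball, implicitly part of the hypotheses, as in \cite{hamada2022ℓ}. The role of the cap $\bar S$ in \eqref{proposed6} is precisely to meet the horizon-restricted Lipschitz condition A3.
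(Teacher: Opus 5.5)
Your proposal is correct and follows essentially the same route as the paper: feasibility of the subproblem via the pseudoinverse step with $S_k=0$ under A1--A2, followed by the integral Taylor remainder with A3, which gives $\|\bm{G}(\bm{U}_{k+1})\|_1\le\frac{\lambda}{2}\|\bm{W}_k\|_1^2\le\frac{\lambda(\alpha_1+\alpha_2)^2}{2}\|\bm{G}(\bm{U}_k)\|_1^2$. Where you differ, you are more careful than the paper: you use the pseudoinverse step directly as a feasible point (the paper detours through comparing minimum-$\ell^1$ and minimum-$\ell^2$ solutions), you check that the segment meets the $|T'-T''|\le\bar S$ proviso of A3, you write out the induction $e_{k+1}\le e_k^2$, and you flag the tacit requirement that the iterates stay in $\mathcal{U}$.
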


\begin{proof}
    Consider the case $S_k=0$. The pseudoinverse $\left(\partial\bm{G}/\partial\bm{u}\right)^{+}$ provides the minimum $\ell^2$-norm solution $\bm{w}_\text{min,2}=\left(\partial\bm{G}/\partial\bm{u}\right)^{+}\bm{G}(\bm{U}_k)$ to the constraint \eqref{proposed4}, which satisfies the following relation with the minimum $\ell^1$-norm solution $\bm{w}_\text{min,1}$.
    \begin{align}
        \|\bm{w}_\text{min,1}\|_1\leq\|\bm{w}_\text{min,2}\|_1\label{minnormsolutions}
    \end{align}
    Due to the consistency of the $\ell^1$-norm, \textbf{A1} and \eqref{minnormsolutions}, we obtain
    \begin{align}
        \|\bm{w}_\text{min,1}\|_1\leq\|\left(\partial\bm{G}/\partial\bm{u}\right)^{+}\|_1\|\bm{G}(\bm{U}_k)\|_1.
    \end{align}
    Thus, we can see that a solution to the linear program \eqref{proposed3}--\eqref{proposed6} always exists when \textbf{A2} holds. When considering the case $S_k\neq0$, an update \eqref{proposed2} always exists as the constraint \eqref{proposed4} becomes more relaxed with the addition of $\left(\partial\bm{G}/\partial T\right)S_k$. From the above discussion, we can see \textbf{A1} and \textbf{A2} ensure that \eqref{proposed4} always has a solution under \eqref{proposed5}--\eqref{proposed6}.

    Next, we use the following relation.
    \begin{align}
        \bm{G}(\bm{U}-\bm{W})=\bm{G}(\bm{U})-\int_0^1\mathcal{J}_{\bm{G}}(\bm{U}-\xi\bm{W})\bm{W}\,\text{d}\xi
    \end{align}
    From the above, it follows that
    \begin{align}
        &\bm{G}(\bm{U}_{k+1})=\bm{G}(\bm{U}_k)-\mathcal{J}_{\bm{G}}(\bm{U}_k)\bm{W}_k\nonumber\\
        &\qquad+\int_0^1(\mathcal{J}_{\bm{G}}(\bm{U}_k)-\mathcal{J}_{\bm{G}}(\bm{U}_k-\xi\bm{W}_k))\bm{W}_k\,\text{d}\xi.
    \end{align}
    Using the triangle inequality for normed spaces, the consistency of the $\ell^1$-norm, the constraint \eqref{proposed4}, and \textbf{A3}, we obtain the following.
    \begin{align}
        \|\bm{G}(\bm{U}_{k+1})\|_1&=\bigg\|\int_0^1(\mathcal{J}_{\bm{G}}(\bm{U}_k)-\mathcal{J}_{\bm{G}}(\bm{U}_k-\xi\bm{W}_k))\bm{W}_k\,\text{d}\xi\bigg\|_1\nonumber\\
        &\leq\frac{\lambda}{2}\|\bm{W}_k\|_1^2\label{g_w_relation}
    \end{align}
    The relation $\|\bm{W}_k\|_1=\|\bm{w}_k\|_1+|S_k|$, step size constraints \eqref{proposed5}--\eqref{proposed6}, and \eqref{g_w_relation} lead to
    \begin{align}
        \|\bm{G}(\bm{U}_{k+1})\|_1\leq\frac{\lambda(\alpha_1+\alpha_2)^2}{2}\|\bm{G}(\bm{U}_k)\|_1^2,\label{quadconvergence}
    \end{align}
    which alongside \textbf{A4}, shows the quadratic convergence of $\|\bm{G}(\bm{U}_k)\|_1$.
\end{proof}

While the preceding theorem addresses the case of \(\sigma_k = 0\), the proposed method is intended to operate with a nonzero value of \(\sigma_k\). The following theorem establishes the result for the case \(\sigma_k > 0\).

\begin{theorem}
Consider the case $\sigma_k =\sigma$, where $\sigma>0$ is a constant. Assume that \textbf{A1}-\textbf{A4} in Theorem 3.1 are satisfied on $\bm{U}\in\mathcal{U}$, along with an additional assumption:
\begin{itemize}
   \item[\textbf{A5.}]
   \(\sigma<2/[\lambda(\alpha_1+\alpha_2)^2].\)
\end{itemize}
Then, there exists an update step $\bar{k}> 0$ such that $\|\bm{G}(\bm{U}_k)\|_1\leq\sigma$ holds for all $k\geq\bar{k}$.
\end{theorem}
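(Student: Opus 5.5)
We reuse the machinery of the proof of Theorem 3.1, since nothing there depended on $\sigma_k=0$ except the final step-size substitution. Write $e_k \coloneqq \|\bm{G}(\bm{U}_k)\|_1$ and $c \coloneqq \lambda(\alpha_1+\alpha_2)^2/2$. Then \textbf{A4} reads $c\,e_0<1$ and \textbf{A5} reads $c\,\sigma<1$.

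\emph{Feasibility.} The argument based on \textbf{A1}, \textbf{A2} and \eqref{minnormsolutions} still shows that \eqref{proposed4}--\eqref{proposed6} admit a solution, now with the right-hand sides $\alpha_i\max(e_k,\sigma)$. These bounds are no smaller than before, so the trust region is only enlarged, and the existence argument applies verbatim.

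\emph{One-step bound.} The integral remainder estimate \eqref{g_w_relation} uses only \eqref{proposed4} and \textbf{A3}, so it still gives $e_{k+1}\le \frac{\lambda}{2}\|\bm{W}_k\|_1^2$. Combining $\|\bm{W}_k\|_1=\|\bm{w}_k\|_1+|S_k|$ with \eqref{proposed5}--\eqref{proposed6}, and discarding the $\bar S$ part of the minimum, yields
\begin{align}
e_{k+1}\le c\,\max(e_k,\sigma)^2 .\nonumber
\end{align}

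\emph{Case analysis.} There are two regimes.
\begin{itemize}
\item If $e_k\le\sigma$, then $e_{k+1}\le c\sigma^2=(c\sigma)\sigma<\sigma$ by \textbf{A5}. Hence the set $\{e\le\sigma\}$ is forward invariant: once entered, it is never left.
\item If $e_k>\sigma$, then $e_{k+1}\le c\,e_k^2=(c e_k)e_k$. By induction from \textbf{A4}, as long as the iterates stay in this regime we have $c e_{k}\le c e_0<1$. Moreover $e_k\le (c e_0)^{2^k-1}e_0$, exactly as in the quadratic convergence of Theorem 3.1, so $e_k\to0$ doubly exponentially.
\end{itemize}
Since $\sigma>0$, the regime $e_k>\sigma$ cannot persist. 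Let $\bar k$ be the first index with $e_{\bar k}\le\sigma$; it is finite, and explicitly any $k$ with $(c e_0)^{2^k-1}e_0\le\sigma$ bounds it. If $e_0\le\sigma$ already, take $\bar k=0$, or $\bar k=1$ if strict positivity is required, using invariance. Forward invariance then gives $e_k\le\sigma$ for all $k\ge\bar k$.

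\emph{Main obstacle.} The delicate point is not the algebra but ensuring the iterates remain in $\mathcal{U}$ and respect the $|T'-T''|\le\bar S$ restriction in \textbf{A3}. The latter holds because $|S_k|\le\bar S$. For the former, I would state, as Theorem 3.1 implicitly does, that $\mathcal{U}$ contains all iterates. Alternatively, one can bound the total displacement $\sum_k\|\bm{W}_k\|_1\le(\alpha_1+\alpha_2)\sum_k\max(e_k,\sigma)$ and note that this is finite only up to $\bar k$. After $\bar k$ the steps stay of size up to $(\alpha_1+\alpha_2)\sigma$, so containment in $\mathcal{U}$ must simply be assumed, and I would make that assumption explicit.
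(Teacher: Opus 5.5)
Your proposal is correct and follows the same route as the paper. Both arguments use the one-step bound $\|\bm{G}(\bm{U}_{k+1})\|_1\le \frac{\lambda(\alpha_1+\alpha_2)^2}{2}\max(\|\bm{G}(\bm{U}_k)\|_1,\sigma)^2$, then quadratic decrease via \textbf{A4} while the error exceeds $\sigma$, then forward invariance of $\{\|\bm{G}\|_1\le\sigma\}$ via \textbf{A5}. Your explicit doubly-exponential bound closes a point the paper only asserts, since monotone decrease alone would not force the error below $\sigma$ in finite time. Your remark that containment of all iterates in $\mathcal{U}$ must be assumed when $\sigma>0$ is a fair caveat that the paper leaves implicit.
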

\begin{proof}
    While $\|\bm{G}(\bm{U}_k)\|_1>\sigma$, the error decreases monotonically as shown in Theorem 3.1. This continues until a certain update step \(k = \bar{k}\), at which point the error drops below the threshold \(\sigma\). Once $\|\bm{G}(\bm{U}_k)\|_1\leq\sigma$ holds, i.e., $k\geq\bar{k}$, \eqref{proposed5}, \eqref{proposed6}, and \eqref{g_w_relation} leads to the relation
    \begin{align}
        \|\bm{G}(\bm{U}_{k+1})\|_1\leq\frac{\lambda(\alpha_1+\alpha_2)^2\sigma^2}{2}.\label{sigmabarconvergence}
    \end{align}
    Owing to \textbf{A5}, the above inequality leads to the conclusion
    \begin{align}
        \|\bm{G}(\bm{U}_{k})\|_1\leq\sigma, \forall k\geq\bar{k}.
    \end{align}
    Overall, the convergence of the error \(\|\bm{G}(\bm{U}_k)\|_1\) proceeds in two distinct phases. In the first phase of $k<\bar{k}$, where \(\|\bm{G}(\bm{U}_k)\|_1 > \sigma\), the error decreases monotonically. In the second phase of $k\geq\bar{k}$, the inequality \(\|\bm{G}(\bm{U}_k)\|_1 \leq \sigma\) remains satisfied for all subsequent iterations, up to the termination of the algorithm.
\end{proof}

Note that \textbf{A1}--\textbf{A3} are reasonable, as the control landscape is assumed to be sufficiently smooth despite nonlinearities. \textbf{A4} may not hold when the initial guess $\bm{U}_0$ is inadequate. \textbf{A5} is a reasonable assumption, as the step size parameter \(\sigma\) can be chosen arbitrarily small. However, this introduces a trade-off: if \(\sigma\) is set too small, it may needlessly inflate the number of steps the algorithm takes to reach an \(\ell^1\)-optimal solution. To mitigate this issue, one may employ a step-dependent parameter \(\sigma_k\) that gradually decreases to zero. A simple example is the following adaptive step size rule, which includes a switching mechanism.
\begin{align}
    \sigma_k = \sigma \min(1, \gamma^{k - k_\text{s}})\label{adaptivestep}
\end{align}
Here, the parameter \(0 < \gamma < 1\) controls the decay rate of the step size, while \(k_\text{s}\) denotes the switching iteration.

\subsection{Optimality Analysis}


To ensure that the decision vector generated by the proposed method is locally optimal, the algorithm incorporates a verification step based on the KKT conditions \cite{nocedal1999numerical}. Using the KKT residual to assess stationarity, iterations are continued until both the constraint violations and the stationarity residual satisfy their prescribed tolerances, thereby preventing premature termination before a locally optimal solution is obtained.

The optimization problem being solved is of the following form:
\begin{align}
    \underset{\bm{U}}{\text{minimize}}\hspace{0.5cm}&F(\bm{U})=T\|\bm{u}\|_1\,,\label{kkt_obj}\\
    \text{s.t.}\hspace{0.5cm}&\bm{G}(\bm{U})=\bm{x}[N](\bm{U})-\bm{x}_\text{f}=\bm{0}\,,\label{kkt_eq}\\
    &\bm{H}(\bm{U})=\begin{bmatrix}
        |\bm{u}|-u_\text{lim}\bm{1}_{Nm}\\
        T_\text{min}-T\\
        T-T_\text{max}
    \end{bmatrix}\leq\bm{0}\,.\label{kkt_ineq}
\end{align}
Here, $|\cdot|$ and $\bm{1}_{Nm}$ denote the elementwise absolute value operator $|\bm{u}|=(|u_1|,\dots,|u_{Nm}|)^\top$ and a vector of ones of dimension $Nm$. The parameters $u_\mathrm{lim},T_\mathrm{min},T_\mathrm{max}$ provide bounds to the input sequence and the control horizon, and can be readily accommodated by adding the constraints \\$\,|\bm{u}_k-\bm{w}_k|\leq u_\text{lim}\bm{1}_{Nm}$ and $T_\text{min}\leq T_k-S_k\leq T_\text{max}$
to the linear program \eqref{proposed3}--\eqref{proposed6}.

Of the four original KKT conditions, only the stationarity, dual feasibility, and complementary slackness conditions need to be checked for. They are listed as follows:
\begin{align}
    &\bm{0} \in \partial F(\bm{U}) + \partial \bm{G}(\bm{U})^\top \bm{\lambda} + \partial \bm{H}(\bm{U})^\top \bm{\mu}\,,\label{stationarity}\\
    &\qquad\quad\bm{\mu} \geq \bm{0}\,,\quad\bm{\mu}^\top\bm{H}(\bm{U}) = 0\,.\label{dualfeas_compslack}
\end{align}
In the above equations, $\bm{\lambda}\in\mathbb{R}^n$ and $\bm{\mu}\in\mathbb{R}^{Nm+2}$ are Lagrange multipliers corresponding to the equality and inequality constraints of \eqref{kkt_eq} and \eqref{kkt_ineq}, respectively. The conditions of primal feasibility are ignored, as the check for optimality is initiated only when they are fulfilled during the update process of \eqref{proposed1}--\eqref{proposed6}. While these conditions can be shown to hold at convergence via the correspondence between the convex subproblems \eqref{proposed3}--\eqref{proposed6} and the original problem \eqref{kkt_obj}--\eqref{kkt_ineq}, similarly to \cite{yamamoto2025sparse}, extending the proof to the free-horizon case requires additional assumptions on the horizon update and is omitted due to space limitations.

In the presence of nonsmoothness and subdifferential mappings, we assess stationarity by minimizing the residual of the generalized KKT conditions, as supported by variational analysis frameworks and stationarity diagnostics in nonsmooth optimization \cite{rockafellar317jb}. The residual
\begin{align}
    \bm{R}(\bm{U})
    \;\coloneqq\;
    \partial F^\star(\bm{U})
    + \partial \bm{G}(\bm{U})^\top \bm{\lambda}^\star
    + \partial \bm{H}(\bm{U})^\top \bm{\mu}^\star,
\end{align}
is defined as the result of minimizing the Euclidean distance between the right hand side of \eqref{stationarity} and the zero vector $\bm{0}$, where the set $\{\partial F^\star,\bm{\lambda}^\star,\bm{\mu}^\star\}$ is given as the minimizer of the optimization problem
\begin{align}
    \underset{\partial F,\bm{\mu},\bm{\lambda}}{\text{minimize}}\hspace{0.2cm}\|\partial F + \partial \bm{G}^\top \bm{\lambda} + \partial \bm{H}^\top \bm{\mu}\|_2\,,\hspace{0.2cm}\text{s.t.}\hspace{0.2cm}&\eqref{dualfeas_compslack}.\nonumber
\end{align}
When an extended decision vector \(\bm{U}\) is provided, the algorithm for verifying the KKT conditions proceeds by attempting to solve the above minimization problem.

The specific minimization problem being solved in practice is expressed as the following quadratic programming:
\begin{align}
    \underset{\partial F,\bm{\mu},\bm{\lambda}}{\text{minimize}}\hspace{0.5cm}&\|\partial F + \partial \bm{G}^\top \bm{\lambda} + \partial \bm{H}^\top \bm{\mu}\|_2\,,\label{KKTresidual0}\\
    \text{s.t.}\hspace{0.5cm}&-T\leq(\partial F)_{1:Nm}\leq T\,,\label{KKTresidual1}\\
    &(\partial F)_{Nm+1} = \|\bm{u}\|_1\,,\label{KKTresidual2}\\
    &S_1(\partial F)_{1:Nm} = T\operatorname{sgn}(\bm{u})\,,\label{KKTresidual3}\\
    &\bm{\mu}\geq0\,,\,\,S_2\bm{\mu}=\bm{0}\,.\label{KKTresidual4}
\end{align}
Here, the operator $\operatorname{sgn}(\cdot)$ is used as an elementwise sign function. The notation $(\cdot)_{i}$ denotes the $i$-th element of a vector, while $(\cdot)_{i:j}$ denotes the vector containing elements from $i$ to $j$. Constraints \eqref{KKTresidual1}, \eqref{KKTresidual3} and \eqref{KKTresidual4} express that the equality or inequality holds for each element.

The matrices $\partial\bm{G}\in\mathbb{R}^{n\times(Nm+1)}$ and $\partial\bm{H}\in\mathbb{R}^{(Nm+2)\times(Nm+1)}$ are computed from their definitions of \eqref{kkt_eq} and \eqref{kkt_ineq} as follows:
\begin{align}
    \partial\bm{G}=\mathcal{J}_{\bm{G}}(\bm{U})
\end{align}
\begin{align}
    (\partial\bm{H})_{ij}=\begin{cases}
        &\operatorname{sgn}((\bm{u})_i) \hspace{0.5cm}(i=j, i\leq Nm)\\
        &-1 \hspace{0.4cm} (i=Nm+1,j=Nm+1)  \\
        &1 \hspace{0.69cm}  (i=Nm+2,j=Nm+1)    \\
        &0 \hspace{0.69cm}\text{otherwise}
    \end{cases}
\end{align}
Meanwhile, the matrices $S_1\in\mathbb{R}^{Nm\times Nm}$ and $S_2\in\mathbb{R}^{(Nm+2)\times (Nm+2)}$ are defined as follows.
\begin{align}
    (S_1)_{ij}&=\begin{cases}
        &1 \hspace{0.6cm}(i=j,|(\bm{u})_i|\neq 0)\\
        &0 \hspace{0.6cm}\text{otherwise}
    \end{cases}\\
    (S_2)_{ij}&=\begin{cases}
        &1 \hspace{0.6cm}(i=j\leq Nm,|(\bm{u})_i|\neq u_\text{lim})\\
        &1 \hspace{0.6cm}(i=j=Nm+1,|T-T_\text{min}|
        \neq 0)\\
        &1 \hspace{0.6cm}(i=j=Nm+2,|T-T_\text{max}|
        \neq 0)\\
        &0 \hspace{0.6cm}\text{otherwise}
    \end{cases}
\end{align}
These matrices enforce dual feasibility and complementary slackness while minimizing the stationarity residual. Constraint \eqref{KKTresidual1} bounds $\partial F$ to within $[-T,T]$, while \eqref{KKTresidual3} sets elements corresponding to nonzero $\bm{u}$ to $\pm T$. The remaining element is determined by \eqref{KKTresidual2}, and \eqref{KKTresidual4} enforces \eqref{dualfeas_compslack}.



A summary of the proposed method is provided in \textbf{Algorithm 1}, where $\epsilon_g$ and $\epsilon_R$
are tolerances for $\|\bm{G}(\bm{U}_k)\|_1$ and $\|\bm{R}_k\|_2$, respectively.

\begin{algorithm}
  \caption{: Free-Horizon Newton Method}
  \begin{algorithmic}
    \STATE \textbf{Input:} $N, \bm{x}_0, \bm{x}_\text{f}, \bm{U}_0, k_\text{lim}, \sigma_k,\alpha_1,\alpha_2,\bar{S},\epsilon_g, \epsilon_R$
    \STATE $k \leftarrow 0$
    \WHILE{$k\leq k_\text{lim}$}
        \STATE $\bm{W}_k\leftarrow$ Solve \eqref{proposed2}--\eqref{proposed6}
        \STATE $\bm{U}_{k+1} \leftarrow \bm{U}_k - \bm{W}_k,\,\,k \leftarrow k + 1$
    \IF{$\|\bm{G}(\bm{U}_k)\|_1 \leq \epsilon_g$}
        \STATE $\bm{R}_k\leftarrow$ Verify KKT conditions by \eqref{KKTresidual0}--\eqref{KKTresidual4}
        \IF{$\|\bm{R}_k\|_2\leq\epsilon_R$}
        \STATE \textbf{break}
    \ENDIF
    \ENDIF
    \ENDWHILE
    \RETURN $\bm{U}_k=(\bm{u}_k^\top,T_k)^\top$
  \end{algorithmic}
\end{algorithm}

\section{Numerical Examples}
\label{sec:numerical}


This section presents two numerical experiments to validate the proposed method. Experiment 1 considers the Hohmann transfer in the two-body problem, whose analytical optimal solution \cite{prussing1992simple} enables direct comparison with the proposed algorithm, while Experiment 2 considers an Earth--Moon transfer in the circular restricted three-body problem (CR3BP) \cite{szebehely2012theory} to demonstrate its effectiveness on a more complex and practically relevant problem.

In the two-body problem, the spacecraft dynamics are given by
\begin{equation}
    \frac{\mathrm{d}\bm{x}}{\mathrm{d}t} = \bm{f}_{\mathrm{c},\text{TBP}}(\bm{x}(t), \widetilde{\bm{u}}(t)) \coloneqq \begin{bmatrix}
        x_3 \\
        x_4 \\
        -\frac{\partial \phi_1}{\partial x_1} + u_x \\
        -\frac{\partial \phi_1}{\partial x_2} + u_y
    \end{bmatrix},
    \label{twobodydynamics}
\end{equation}
where $\bm{x} = [x_1, x_2, x_3, x_4]^\top$ denotes position and velocity, and $\phi_1(x_1,x_2)$ is the gravitational potential
\begin{align}
    \phi_1(x_1,x_2) = -\frac{GM}{\sqrt{x_1^2 + x_2^2}}.
\end{align}

In contrast, in the circular restricted three-body problem (CR3BP) expressed in a rotating frame, the dynamics become
\begin{equation}
    \frac{\mathrm{d}\bm{x}}{\mathrm{d}t}  = \bm{f}_{\mathrm{c},\text{CR3BP}}(\bm{x}(t),\widetilde{\bm{u}}(t))\coloneqq \begin{bmatrix}
        x_3 \\
        x_4 \\
        2x_4 + \frac{\partial \phi_2}{\partial x_1} + u_x \\
        -2x_3 + \frac{\partial \phi_2}{\partial x_2} + u_y
    \end{bmatrix},
    \label{cr3bpdynamics}
\end{equation}
where $\phi_2(x_1,x_2)$ is the pseudo-potential, defined using $\mu \in (0,1)$, the mass ratio of the two primary bodies,
\begin{align}
    \phi_2(x_1,x_2) &= \frac{1 - \mu}{\sqrt{(x_1 + \mu)^2 + x_2^2}} + \frac{\mu}{\sqrt{(x_1 - 1 + \mu)^2 + x_2^2}} \nonumber \\
    &\quad + \frac{1}{2}(x_1^2 + x_2^2).
\end{align}

All simulations are implemented in MATLAB software \cite{MATLAB2023} on an AMD Ryzen 9 9950X processor. The discrete dynamics \eqref{problem1}--\eqref{problem2} are obtained by applying RK4 to \eqref{twobodydynamics} and \eqref{cr3bpdynamics} using $N=1000$ steps over a variable time horizon. The optimization problems in \eqref{proposed3}--\eqref{proposed6} and \eqref{KKTresidual0}--\eqref{KKTresidual4} are solved using Gurobi \cite{gurobi}. The experimental settings are summarized in Table I. In Experiment 1, the initial and final states lie on circular orbits of radii 1 and 3, respectively, while in Experiment 2 they correspond to circular parking orbits $200\,\mathrm{km}$ above Earth and $2000\,\mathrm{km}$ above the Moon, respectively. For both experiments, the maximum iteration count and convergence tolerances are set to $k_{\mathrm{lim}}=100$, $\epsilon_g=10^{-3}$, and $\epsilon_R=10^{-3}$. For comparison, Experiment 1 is also solved using sequential quadratic programming (SQP) and an interior-point method (IPM) implemented in \texttt{fmincon} as representative general-purpose nonlinear programming solvers, with analytic derivatives provided to ensure a fair comparison.

\begin{table}[h]
    \centering
    \caption{Experiment settings}
    \begin{tabular}{c|c|c}
        \toprule
        Variable & Experiment 1 & Experiment 2 \\
        \midrule
        $GM$ or $\mu$ & $GM=4\pi^2$ & $\mu=0.0122$\\
        $\bm{x}_0$ & $[0,1,2\pi,0]^\top$ & $[-0.0242,-0.0121,5.38,-5.38]^\top$\\
        $\bm{x}_\text{f}$ & $[-3,0,0,\frac{2\sqrt{3}}{3}\pi]^\top$ & $[0.988,-0.00972,1.12,0]^\top$\\
        $\bm{u}_0$ & $\bm{0}_{Nm}$ & $[2420,0,\dots,0]^\top$\\
        $T_0$ & $0.75$ & $1.4$\\
        $\sigma_k$ & $0.1$ & \eqref{adaptivestep}, $\sigma = 1.0 , \gamma = 0.9, k_\text{s} = 25$\\
        $\alpha_1$ & $4.0\times10^3$ & $4.0\times10^3$\\
        $\alpha_2$ & $5.0\times10^{-1}$ & $1.0\times10^{-2}$ \\
        $\bar{S}$ &$5.0\times10^{-1}$ &$1.0\times10^{-2}$\\
        \bottomrule
    \end{tabular}
\end{table}

\begin{table}[t]
    \centering
    \caption{Comparison of solutions for the Hohmann transfer problem in Experiment 1}
    \begin{tabular}{c|c|c|c|c}
        \toprule
         & Proposed & SQP & IPM & Analytic \\
        \midrule
        $\|\bm{G}\|_1$ & $8.72\cdot 10^{-6}$ & $5.40\cdot 10^{-7}$ &  $7.99\cdot 10^{-6}$ & $0$  \\
        $\|\bm{R}\|_2$ & $3.41\cdot 10^{-4}$ & $1.40\cdot 10^{3}$ & $1.70\cdot 10^{3}$ & $0$ \\
        $\Delta V$ & $2.47$ & $2.68$ & $2.84$ & $2.47$ \\
        $T$ & $1.66$ & $1.74$ & $1.73$ & $1.66$ \\
        Iterations & 12 & 1000 & 1000 & - \\
        Runtime [$\mathrm{s}$] & $3.79$ & $213$ & $544$ & - \\
        \bottomrule
    \end{tabular}
\end{table}


The results are shown in Figures 1--3, with each subfigure corresponding to Experiments 1 and 2. In Experiment 1, the proposed method converged in $k_\text{end}=12$, yielding a trajectory identical to the analytically optimal Hohmann transfer with two impulsive maneuvers. The numerically obtained solution also closely matches the analytical one, as shown in Table II, while the small KKT residual further confirms optimality. Note that the constraint residual in Figure 3 is not monotonically decreasing because the assumptions made in Section III.C were initially violated.

Although both the SQP and IPM benchmarks of Experiment 1 readily produced feasible solutions, neither converged to the optimal solution despite requiring substantially more iterations and computational time than the proposed method. Both algorithms were terminated after 1000 iterations without satisfying the prescribed convergence criteria. At termination, the resulting control inputs remained far from sparse (Fig. 2(a)), and the corresponding trajectories deviated significantly from the Hohmann transfer solution (Fig. 1(a)).

In Experiment 2, convergence was achieved at $k_{\text{end}} = 56$ with $20.7 \,\mathrm{s}$ of runtime. The adaptive step size \eqref{adaptivestep} enabled gradual horizon extension and residual reduction, as can be seen in Figure 3-(b). The final horizon was $T = 1.70$, with a trajectory cost of $\Delta V = 4.10$. The terminal residuals, $\|\bm{G}\|_2 = 9.95 \times 10^{-4}$ and $\|\bm{R}\|_1 = 2.01 \times 10^{-5}$, further confirm the feasibility and local optimality of the resulting trajectory.

\section{Conclusion}\label{sec:conclusion}
In this paper, a free-horizon Newton method was developed for solving nonlinear $\ell^1$-optimal control problems in which both the control input and the control horizon are treated as optimization variables. By reformulating the cost as the product of the control horizon and the $\ell^1$-norm of the control input, the method effectively captures control effort under a variable horizon. The algorithm employs a first-order approximation of the objective and incorporates KKT-based optimality verification to ensure convergence and solution quality.

Numerical simulations of Hohmann and Earth--Moon transfers demonstrate that the proposed method is capable of computing high-fidelity solutions efficiently. These results confirm the method's potential for trajectory design in optimal control applications where the transfer time is not predetermined.

\section*{Acknowledgment}
This work was supported by JSPS KAKENHI Grant Numbers JP23K20946 and JP24K00908.

\begin{figure}[h!]
    \centering
    \begin{subfigure}{0.475\linewidth}
        \centering
        \includegraphics[width=\linewidth]{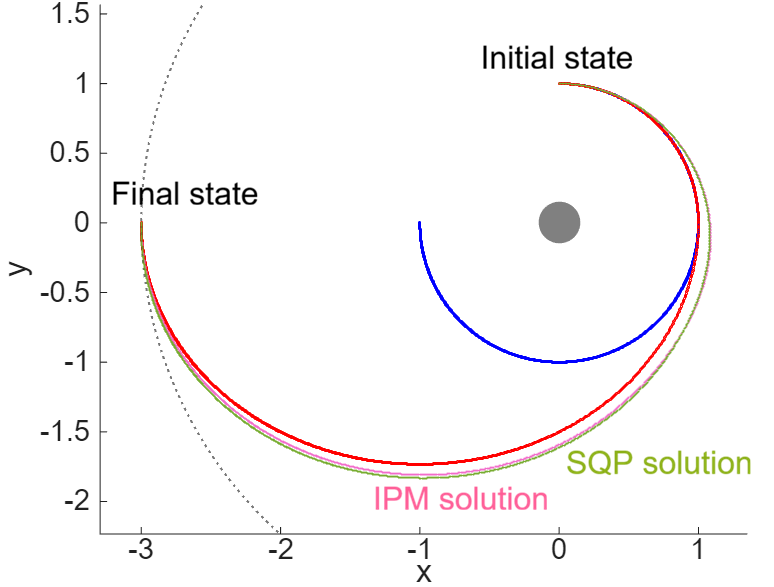}
        \caption{Experiment 1}
    \end{subfigure}
    \hfill
    \begin{subfigure}{0.495\linewidth}
        \centering
        \includegraphics[width=\linewidth]{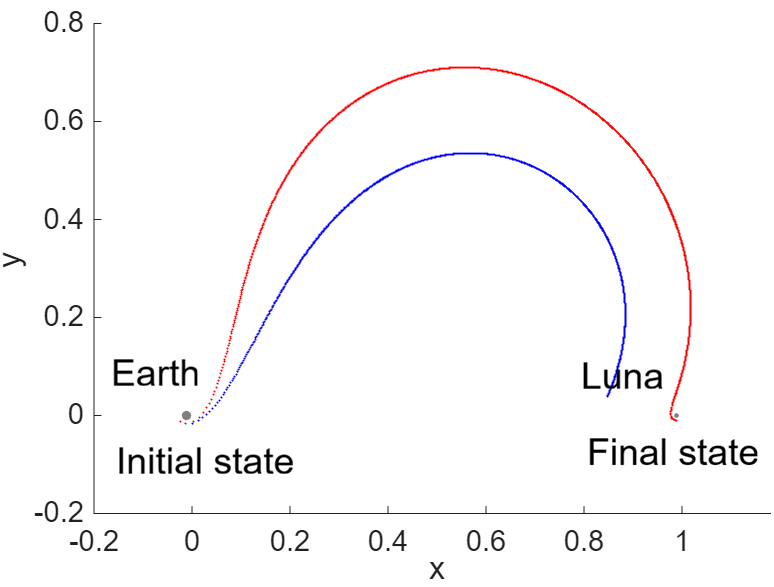}
        \caption{Experiment 2}
    \end{subfigure}
    \caption{Numerical trajectories. Blue: initial guess $\bm{U}_0$. Red: proposed solution. SQP/IPM shown in (a).}
\end{figure}

\begin{figure}[h!]
    \centering
    \begin{subfigure}{0.48\linewidth}
        \centering
        \includegraphics[width=\linewidth]{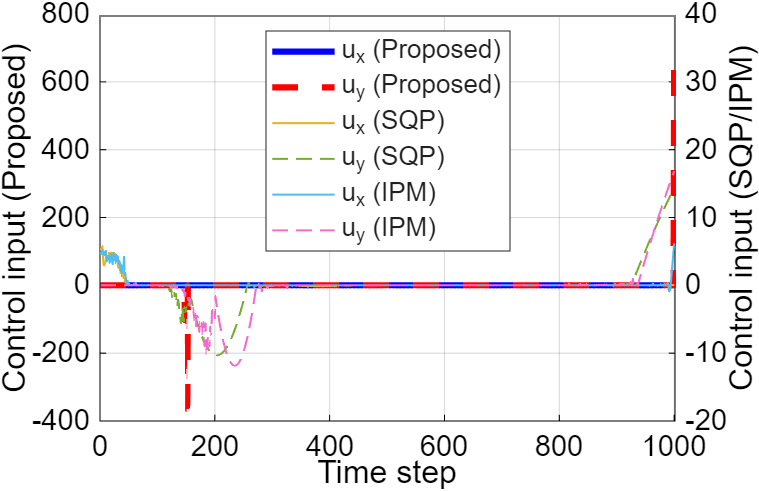}
        \caption{Experiment 1}
    \end{subfigure}
    \hfill
    \begin{subfigure}{0.49\linewidth}
        \centering
        \includegraphics[width=\linewidth]{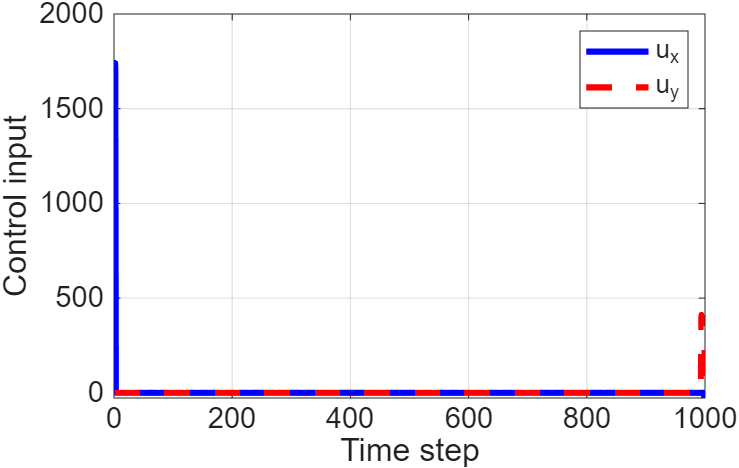}
        \caption{Experiment 2}
    \end{subfigure}
    \caption{The control inputs $u_x$ and $u_y$ obtained from each experiment.}
\end{figure}

\begin{figure}[h!]
    \centering
    \begin{subfigure}{0.48\linewidth}
        \centering
        \includegraphics[width=\linewidth]{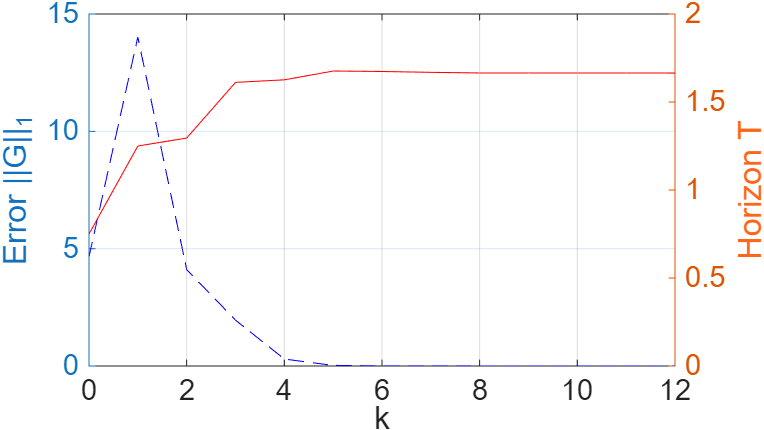}
        \caption{Experiment 1}
    \end{subfigure}
    \hfill
    \begin{subfigure}{0.49\linewidth}
        \centering
        \includegraphics[width=\linewidth]{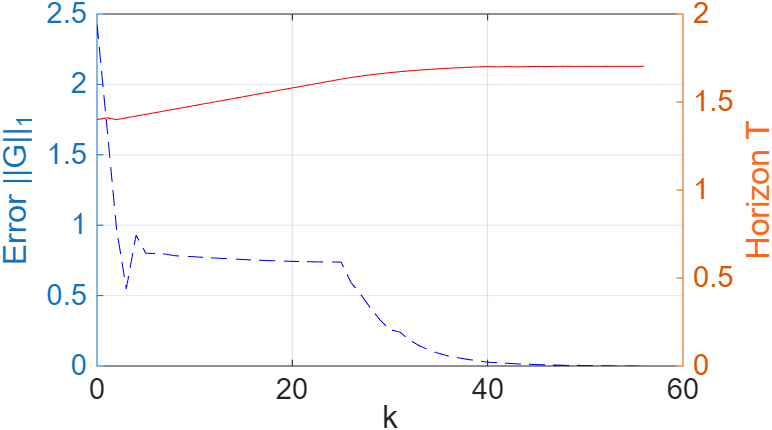}
        \caption{Experiment 2}
    \end{subfigure}
    \caption{Evolution of the constraint residual $\|\bm{G}\|_1$ and horizon $T$ throughout the experiments.}
\end{figure}

\addtolength{\textheight}{-12cm}   





\end{document}